\newcommand{\eq}{\begin{equation}}
\newcommand{\en}{\end{equation}}
\newcommand{\ex}{\mathbb E}
\newtheorem{theorem}{\large Theorem}[]
\newtheorem{proposition}[theorem] {\large Proposition}
\newtheorem{corollary}[theorem]{\large Corollary}
\begin{document}
\title{Moments of random sums and Robbins' problem of optimal stopping}
\author{Alexander Gnedin\thanks{Postal address:
 Department of Mathematics, Utrecht University,
 Postbus 80010, 3508 TA Utrecht, The Netherlands. E-mail address: A.V.Gnedin@uu.nl}
~~and~~ Alexander Iksanov\thanks{Postal address: Faculty of
Cybernetics, National T. Shevchenko University of Kiev,
Kiev-01033, Ukraine. E-mail address: iksan72@mail.ru}}
\date{}
\maketitle
\begin{abstract}
\noindent
Robbins' problem of optimal stopping asks one to minimise the expected {\it rank} of observation chosen  by some nonanticipating 
 stopping rule. We settle a conjecture regarding the {\it value} of the stopped variable under the rule optimal in the sense of the rank, 
by embedding the problem
in a much  more general context of selection problems 
with the nonanticipation constraint lifted, and
with the payoff growing like a power function of the rank.

\end{abstract}

\paragraph{1.} Let $X_1,\dots, X_n$ be independent random variables sampled sequentially from  the uniform $[0,1]$ distribution,
and let $Y_1<\ldots<Y_n$ be their order statistics.
The rank $R_j$ of the variable $X_j$ is defined by setting $R_j=k$ on the event $X_j=Y_k$.
Robbins' problem of optimal stopping \cite{BrSurv} asks one to minimize the expected rank
$\ex R_\tau$ over all stopping times $\tau$ that assume values in $\{1,\dots,n\}$
and are adapted to the natural filtration of the sequence $X_1,\dots, X_n$.
Let $\tau_n$ be the optimal stopping time.
The minimum expected   rank $\ex R_{\tau_n}$ increases as $n$ grows, and converges to some finite limit $v$ whose exact value is unknown.
The closest known upper bound is slightly less than $7/3$.  Finding $v$ or even improving the existing rough bounds remains a challenge.
A major source of difficulties is that the
optimal stopping time $\tau_n$ is a
 very complicated function of the sample. It seems that $\tau_n$ has not been computed for $n>3$.
Moreover, for large $n$ there is no simplification, and the complexity of the optimal stopping time persists in
the `$n=\infty$' limiting form of the problem \cite{GnRank}.

In a recent paper Bruss and Swan  \cite{BrSw} stressed that it is not even known if $\limsup_n n \ex X_{\tau_n}$ is finite.
They mentioned that the property was first conjectured in \cite{Assaf}.
While the conjecture stems from the attempts to bound $v$ by the comparison  with much simpler problem of minimising
$\ex X_\tau$ (or minor variations of the problem), it seems that the question is of independent interest
 as a relation between the stopped sample value  and its rank.
In this note we  settle the conjecture by proving a considerably more general assertion:
\begin{proposition}
Fix $p>0$.  For $n=1,2,\dots$ let $\sigma_n$ be a random variable 
with range $\{1,\ldots,n\}$ and arbitrary joint  distribution with $X_1,\dots,X_n$. Then
\begin{equation}\label{conj}
\limsup_n \ex [R_{\sigma_n}]^p<\infty  ~~~{\rm implies}~~~   \limsup_n n^p\,\ex [X_{\sigma_n}]^p<\infty.
\end{equation}
In particular, $\lim\limits_{n\to\infty} n^p \,\ex [X_{\tau_n}]^p<\infty$ for $\tau_n$ the stopping time minimising $\ex [R_\tau]^p$
over all stopping times adapted to $X_1,\dots,X_n$.
\end{proposition}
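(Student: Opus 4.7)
Since by definition of rank $R_{\sigma_n}=k$ if and only if $X_{\sigma_n}=Y_k$, one has $X_{\sigma_n}=Y_{R_{\sigma_n}}$ and
\begin{equation*}
\ex [X_{\sigma_n}]^p \;=\; \sum_{k=1}^n \ex\bigl[Y_k^p\,\1_{\{R_{\sigma_n}=k\}}\bigr].
\end{equation*}
My plan is to split each summand according to whether $Y_k$ stays below $2k/n$, its typical range. On the ``good'' part the pointwise inequality $Y_k^p\le (2k/n)^p$ gives
\begin{equation*}
\sum_{k=1}^n\ex\bigl[Y_k^p\,\1_{\{R_{\sigma_n}=k,\,Y_k\le 2k/n\}}\bigr]\;\le\;\frac{2^p}{n^p}\sum_{k=1}^n k^p\,\prob(R_{\sigma_n}=k)\;=\;\frac{2^p}{n^p}\,\ex[R_{\sigma_n}]^p,
\end{equation*}
which already reconstructs the desired main term.

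For the complementary ``bad'' part I would discard the indicator $\1_{\{R_{\sigma_n}=k\}}$ (no distributional information about $\sigma_n$ is needed here) and estimate by Cauchy--Schwarz,
\begin{equation*}
\ex\bigl[Y_k^p\,\1_{\{Y_k>2k/n\}}\bigr]\;\le\;\bigl(\ex [Y_k^{2p}]\bigr)^{1/2}\bigl(\prob(Y_k>2k/n)\bigr)^{1/2}.
\end{equation*}
The $2p$-th moment of $Y_k\sim\mathrm{Beta}(k,n-k+1)$ is explicit in Gamma functions and is at most $C_p(k/n)^{2p}$; the tail equals $\prob(\mathrm{Bin}(n,2k/n)<k)$, which is bounded by $e^{-k/4}$ by a Chernoff estimate when $2k\le n$ and vanishes when $2k>n$ because $Y_k\le 1$. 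Summing,
\begin{equation*}
\sum_{k\ge1}\ex\bigl[Y_k^p\,\1_{\{Y_k>2k/n\}}\bigr]\;\le\;\frac{C_p^{1/2}}{n^p}\sum_{k\ge 1}k^p e^{-k/8}\;=\;\frac{c_p}{n^p}.
\end{equation*}

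Combining the two pieces produces a distribution-free bound $n^p\,\ex[X_{\sigma_n}]^p\le 2^p\,\ex[R_{\sigma_n}]^p+c_p$ with $c_p$ depending only on $p$; (\ref{conj}) follows on taking $\limsup_n$, and the ``in particular'' statement is automatic once one exhibits any stopping rule with $\sup_n\ex[R_\tau]^p<\infty$, forcing the optimiser $\tau_n$ to do no worse. The main point to get right is the balance between the typical value of $Y_k$ and the rank tail, which dictates both the threshold $2k/n$ and the use of the $2p$-th moment (rather than the $p$-th) in Cauchy--Schwarz: a cruder estimate such as $\ex[Y_k^p\1_{\{Y_k>2k/n\}}]\le\prob(Y_k>2k/n)$ would sum only to $O(1)$, and a purely Markov-based route via $\prob(R_{\sigma_n}\ge k)\le\ex[R_{\sigma_n}]^p/k^p$ would introduce a spurious $\log n$ factor. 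With the decomposition in place, the individual moment and Chernoff estimates are routine.
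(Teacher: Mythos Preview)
Your argument is correct and complete: the split at the threshold $2k/n$, the Beta moment bound $\ex Y_k^{2p}\le C_p(k/n)^{2p}$, and the Chernoff estimate $\prob(Y_k>2k/n)=\prob(\mathrm{Bin}(n,2k/n)<k)\le e^{-k/4}$ combine exactly as you say to give the distribution-free inequality $n^p\,\ex[X_{\sigma_n}]^p\le 2^p\,\ex[R_{\sigma_n}]^p+c_p$.

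Your route, however, differs substantially from the paper's. The paper does not work with the Beta law of $Y_k$ at all; instead it realises the order statistics as $Y_k\stackrel{d}{=}S_k/S_n$ with $S_k$ a sum of i.i.d.\ exponentials, then controls $S_k$ uniformly in $k$ via the supremum $M_\lambda=\sup_{k\ge0}(S_k-\lambda k)$ of the associated negative-drift walk, invoking a general moment result (their Proposition~\ref{main}, from Alsmeyer--Iksanov) that $\ex\xi^{p+1}<\infty\Leftrightarrow\ex M_\lambda^p<\infty$. The split there is on the large-deviation event $\{n/S_n>1+\epsilon\}$ rather than on $\{Y_k>2k/n\}$. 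What the paper's approach buys is a conceptual link to random-walk theory and a seamless passage to the Poisson-embedded $n=\infty$ problem of Section~3, where the $S_k$ appear directly as atom heights; what your approach buys is self-containment---no exponential representation, no external lemma on walk suprema, just elementary Beta moments and a binomial tail bound---together with a cleaner explicit constant. Both yield the same structural inequality $n^p\,\ex[X_{\sigma_n}]^p\le A\,\ex[R_{\sigma_n}]^p+B$.
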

\noindent The idea is to bound $X_{\sigma_n}$ by exploiting
properties of a random walk with negative drift.

\paragraph{2.} Let $\xi, \xi_1,\xi_2,\dots$ be iid nonnegative random variables with $\mu={\mathbb E}\xi \in (0,\infty)$.
Let $S_k:=\xi_1+\cdots+\xi_k$ and  for $\lambda>\mu$ let
$M_\lambda=:\sup_{k\geq 0}(S_k-\lambda k)$.

\begin{proposition}\label{main} For $p>0$
$${\mathbb E}\xi^{p+1}<\infty ~~~\Longleftrightarrow~~~ {\mathbb E} M_\lambda^p     <\infty.$$
\end{proposition}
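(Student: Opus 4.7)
My plan is to exploit the ladder-variable decomposition of the negative-drift walk $Y_k := S_k - \lambda k$. Since $\ex(\xi - \lambda) = \mu - \lambda < 0$, we have $Y_k \to -\infty$ a.s., so $M_\lambda$ is finite a.s., and the ascending ladder structure reduces the $p$-th moment of $M_\lambda$ to that of a single ladder height, which one then compares with $\ex \xi^{p+1}$ via a Kiefer--Wolfowitz-type overshoot estimate.

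Concretely, let $T := \inf\{k \geq 1 : Y_k > 0\}$; negative drift gives $q := \prob(T < \infty) \in (0, 1)$. Iterating the strong Markov property at the successive ascending ladder epochs yields
\begin{equation*}
M_\lambda \ed H_1 + H_2 + \cdots + H_N,
\end{equation*}
where $N$ is a geometric variable with $\prob(N \geq n) = q^n$ and $H_1, H_2, \ldots$ are iid copies of $H := (Y_T \mid T < \infty)$, independent of $N$. Because $N$ has every exponential moment, geometric compounding preserves finiteness of $p$-th moments (Minkowski for $p \geq 1$; subadditivity of $x \mapsto x^p$ for $0 < p < 1$), so $\ex M_\lambda^p < \infty \iff \ex H^p < \infty$.

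It then remains to prove the overshoot equivalence $\ex H^p < \infty \iff \ex \xi^{p+1} < \infty$. For the forward direction I would use the first descending ladder epoch $T^- := \inf\{k \geq 1 : Y_k \leq 0\}$, integrable since the drift is negative, and run a Wald decomposition of the ascending ladder height through $T^-$ to obtain a Lorden-style bound
\begin{equation*}
\prob(H > x) \leq \frac{C}{\lambda - \mu}\int_x^\infty \prob(\xi > y + \lambda)\, dy;
\end{equation*}
Fubini then gives $\ex H^p \leq C'\, \ex \xi^{p+1}$. For the converse I argue contrapositively: if $\ex \xi^{p+1} = \infty$, then the events $A_k := \{\xi_k > 2x\} \cap \{Y_{k-1} \geq -x/2\}$ for $1 \leq k \leq K$ with $K$ of order $x/(\lambda - \mu)$ each force $M_\lambda > x$, and a Bonferroni bound (using that $\prob(Y_{k-1} \geq -x/2)$ is bounded away from $0$ on this range by the weak law of large numbers) yields $\prob(M_\lambda > x) \geq c\, x\, \prob(\xi > 2x)$, whence $\ex M_\lambda^p \geq c'\, \ex \xi^{p+1} = \infty$.

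I expect the main obstacle to be the upper bound on $\prob(H > x)$: producing the integrated tail rate $\int_x^\infty \prob(\xi > y)\,dy$, which is precisely what raises the moment by one order, requires either a careful regenerative argument through the descending ladder or the Wiener--Hopf factorisation. The shortest route, if one is willing to quote a classical fact, is simply to invoke the Kiefer--Wolfowitz theorem, which is the very equivalence claimed in Proposition~\ref{main}.
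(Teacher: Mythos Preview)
Your outline is correct and amounts to the classical ladder-height proof of the Kiefer--Wolfowitz theorem. The paper, however, does not prove the equivalence from scratch: it simply notes that for nonnegative $\xi$ the condition $\ex\xi^{p+1}<\infty$ is equivalent to $\ex[(\xi-\lambda)^+]^{p+1}<\infty$, and then invokes Lemma~3.5 of Alsmeyer and Iksanov (2009), which is precisely the moment equivalence for the supremum of a negatively drifted random walk. In other words, the paper quotes the very result you propose to reprove.

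What your sketch buys is self-containment. The geometric-compound representation $M_\lambda\ed\sum_{i=1}^N H_i$ and the big-jump lower bound on $\prob(M_\lambda>x)$ are both clean; the latter is in fact simpler than you indicate, since $\xi\geq 0$ forces $Y_{k-1}\geq -\lambda(k-1)$ deterministically, so for $k\leq x/\lambda$ no weak-law estimate is needed at all. The genuine work, as you yourself flag, is the integrated-tail upper bound on the ascending ladder height, and your Wald/Lorden sketch there is plausible but not yet a proof. If at that step you are prepared to cite Kiefer--Wolfowitz, you have essentially collapsed back to the paper's one-line argument.
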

\begin{proof}
The moment condition on $\xi$ is equivalent to ${\mathbb
E}[(\xi-\lambda)^+]^{p+1}<\infty$, and the result follows from
Lemma 3.5 in \cite{AlsIks}.
\end{proof}

\begin{corollary}\label{cor} Suppose $\ex \xi^{p+1}<\infty$ and
let $\sigma$ be a nonnegative integer random variable with
$\ex\sigma^p<\infty$. Then $\ex S_\sigma^p<\infty$.
\end{corollary}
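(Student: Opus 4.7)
The plan is to exploit the pointwise bound $S_k \le M_\lambda + \lambda k$, valid for every $k \ge 0$ and every $\lambda > \mu$ simply from the definition $M_\lambda = \sup_{k \ge 0}(S_k - \lambda k)$. Since this inequality holds sample-pathwise, substituting the (possibly $(\xi_i)$-dependent) integer-valued $\sigma$ for $k$ yields
\[
S_\sigma \;\le\; M_\lambda + \lambda \sigma \qquad \text{a.s.}
\]
No stopping-time or independence hypothesis on $\sigma$ is needed, because the inequality is deterministic in $\omega$.

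From here I would raise to the $p$-th power and use the elementary inequality $(a+b)^p \le C_p(a^p + b^p)$ (with $C_p = 2^{(p-1)^+}$) valid for $a,b \ge 0$, to obtain
\[
\ex S_\sigma^p \;\le\; C_p\bigl(\ex M_\lambda^p + \lambda^p\, \ex \sigma^p\bigr).
\]
The first term on the right is finite by Proposition \ref{main}, since the hypothesis $\ex \xi^{p+1} < \infty$ gives $\ex M_\lambda^p < \infty$ for any fixed $\lambda > \mu$; the second term is finite by assumption on $\sigma$. This yields $\ex S_\sigma^p < \infty$.

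There is essentially no obstacle: the only thing to notice is that the bound $S_k \le M_\lambda + \lambda k$ is a deterministic consequence of the supremum definition, so it passes unchanged through composition with an arbitrary $\sigma$, no matter how $\sigma$ is coupled to the increments. This is precisely the feature that will let the corollary be deployed in the proof of the Proposition above, where $\sigma_n$ is permitted to depend arbitrarily on $X_1,\dots,X_n$.
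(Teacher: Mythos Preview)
Your proof is correct and follows exactly the same route as the paper's: the paper's one-line argument is $S_\sigma^p \le (M_\lambda + \lambda\sigma)^p \le c_p(M_\lambda^p + \lambda^p \sigma^p)$ with $c_p := 2^{p-1}\vee 1$, which is your $C_p = 2^{(p-1)^+}$. Your additional remarks about the bound being sample-pathwise and requiring no independence or stopping-time hypothesis on $\sigma$ are accurate and make explicit what the paper leaves implicit.
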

\begin{proof} This follows from
$S_\sigma^p\leq (M_\lambda+\lambda\sigma)^p\leq c_p(M_\lambda^p+\lambda^p\sigma^p),$
where $c_p:=2^{p-1}\vee 1$.
\end{proof}

\paragraph{3.} We  can apply Corollary \ref{cor} to a Poisson-embedded, limiting form of the stopping problem with continuous time \cite{GnRank}.
Let $\xi_1,\xi_2,\dots$ be iid rate-one exponential variables,
$S_k$ as above, and let $T_1,T_2,\dots$ be iid uniform $[0,1]$
random times, independent of the $\xi_j$'s. The points $(T_k,S_k)$
are the atoms of a homogeneous planar Poisson process $\cal P$ in
$[0,1]\times[0,\infty)$. To introduce the dynamics, consider an 
observer whose information at time $t\in[0,1]$ is the (infinite)
configuration of points of $\cal P$ within the strip
$[0,t]\times[0,\infty)$, that is $\{(T_k,S_k): T_k\leq t\}$. The
rank of point $(T_k,S_k)$ is defined as $R_{T_k}=k$, meaning that
$S_k$ is the $k$th smallest value among $S_1, S_2,\dots$. The
piece of information added at time $T_k$ is the point $(T_k,S_k)$,
but not the  rank $R_{T_k}$.

Suppose the objective of the observer is to minimize $\ex [R_\tau]^p$ over stopping times $\tau$
that assume values in the random set $\{T_1,T_2\dots\}$
and are adapted to the information flow
of the observer. 
For the optimal stopping time $\tau_\infty$ it is known from the previous studies  that $\ex [R_{\tau_\infty}]^p<\infty$ (see \cite{GnRank} and \cite{Gianini}).
Taking $\sigma=R_{\tau_\infty}$, we have
$\ex [S_\sigma]^p<\infty$.
The case $p=1$ corresponds
to the infinite version of Robbins's problem of minimising the expected rank.

\paragraph{4.} To apply the above to a finite sample, we shall use the familiar realisation
of uniform order statistics through sums of exponential variables, as
$$(Y_k, ~1\leq k\leq n)\stackrel{d}{=}(S_k/S_n,~1\leq k\leq n).$$
Introducing the event $A_n:=\{n/S_n> 1+\epsilon\}$, we can
estimate for $1\leq k\leq n$
$$n^pY_k^p=n^pY_k^p 1_{A_n}+n^pY_k^p 1_{A_n^c}\leq  n^p 1_{A_n}+(1+\epsilon)^\rho S_k^p \leq n^p 1_{A_n}+
c_p(1+\epsilon)^p(M_\lambda^p+\lambda^p k^p),$$ where we used
$S_k\leq M_\lambda +\lambda k$. Using a large deviation bound for
the probability of $A_n$ and sending $\epsilon\to 0$ we conclude
that for any random variable $\sigma_n$ with values in
$\{1,\ldots,n\}$
$$\limsup\limits_n n^p\ex [Y_{\sigma_n}]^p\leq c_p\lambda^p \limsup\limits_n \ex \sigma_n^p+c_p\ex M_\lambda^p.$$
Finally, taking $\sigma_n=R_{\tau_n}$, Proposition \ref{main}
follows from
$$ \limsup\limits_n n^p\ex [X_{\tau_n}]^p\leq c_p\lambda^p\limsup\limits_n \ex [R_{\tau_n}]^p+c_p\ex M_\lambda^p<\infty,$$
since $\ex [R_{\tau_n}]^p$ converges to a finite limit (see
\cite{GnRank}, \cite{Gianini}).

\vskip0.2cm
\noindent
{\bf Acknowledgement} This note was completed during the second author's visit to Utrecht, supported by the
Department of Mathematics and stochastic cluster STAR.

\end{document}